\documentclass[12pt]{amsart}

\usepackage[T1]{fontenc}
\usepackage[utf8]{inputenc}
\usepackage{hyperref}
\usepackage{libertinus}
\usepackage{amsmath,amssymb,amsthm,mathtools}

\usepackage{geometry}
\usepackage{enumitem}
\usepackage{microtype}

\newtheorem{theorem}{Theorem}
\newtheorem{corollary}[theorem]{Corollary}
\newtheorem{proposition}[theorem]{Proposition}
\newtheorem{lemma}[theorem]{Lemma}
\newtheorem{remark}[theorem]{Remark}

\newcommand{\R}{\mathbb R}
\newcommand{\C}{\mathbb C}
\newcommand{\N}{\mathbb N}
\newcommand{\Q}{\mathbb Q}
\newcommand{\inner}[2]{\left\langle #1,#2\right\rangle}
\newcommand{\norm}[1]{\left\|#1\right\|}
\newcommand{\diag}{\operatorname{diag}}

\newcommand{\abs}[1]{\left|#1\right|}

\title[Strong Polarization and Entropy]
{Strong Polarization and Entropy}

\author[D. Galicer]{Daniel Galicer}
\address{
	Departamento de Matem\'atica y Estad\'istica, Universidad Torcuato Di Tella,
	Av. Figueroa Alcorta 7350 (1428),
	Buenos Aires, Argentina and IMAS-CONICET. 
}
\email{daniel.galicer@utdt.edu}

\author[O. Ortega--Moreno]{Oscar Ortega--Moreno}
\address{
	Departamento de Matem\'atica, CUNEF Universidad,
	Madrid, Spain
}
\email{oscar.ortegamoreno@cunef.edu}

\author[D. Pinasco]{Dami\'an Pinasco}
\address{
	Departamento de Matem\'atica y Estad\'istica, Universidad Torcuato Di Tella, Av. Figueroa Alcorta 7350 (1428),
	Buenos Aires, Argentina and CONICET.
}
\email{dpinasco@utdt.edu}

\date{}

\begin{document}
	
	\maketitle
	
	\begin{abstract}
		We show that for any set of $n$ unit vectors $v_1,\ldots,v_n$ in a real Hilbert space and positive numbers $p_1,\ldots,p_n$ satisfying $\sum_jp_j = 1$, there exists a unit vector $u$ such that
		\[
		\sum_{j=1}^n \frac{p_j^2}{\langle v_j, u\rangle^2}\leq 1.
		\]
		This inequality is a weighted version of the strong polarization inequality. As immediate corollaries, it yields a polarization inequality for products of powers of linear functionals and a strengthening of Bang’s classical plank theorem for Hilbert spaces. The proof follows the approach introduced by Martínez and Ortega-Moreno in their recent solution to the strong polarization conjecture posed by Ball and Frenkel. We further note that our weighted inequality admits a Shannon-entropy interpretation: in a random sensing model, the entropy of the weights controls the minimum expected logarithmic loss.
		
	\end{abstract}
	\section{Introduction}
	
	In this paper we prove the following generalization of the strong polarization inequality, conjectured by Ball and Frenkel, an recently solved by Martínez and Ortega-Moreno in \cite{MartinezOrtega}.
	
	\begin{theorem}[Weighted strong polarization inequality]\label{thm:strong}
		Let \(H\) be a real Hilbert space, let \(v_1,\ldots,v_n\in H\) be unit vectors, and let
		\(p_1,\ldots,p_n>0\) be weights satisfying
		\[
		\sum_{j=1}^n p_j = 1.
		\]
		Then there exists a unit vector \(u\) such that
		\begin{equation}\label{inq:strong}
			\sum_{j=1}^n \frac{p_j^2}{\langle v_j,u\rangle^2} \leq 1.
		\end{equation}
		
	\end{theorem}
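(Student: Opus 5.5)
The plan is to follow the variational strategy of Martínez and Ortega-Moreno. First reduce to a finite-dimensional problem. Everything depends only on $\langle v_j,u\rangle$, so we may replace $H$ by $E=\operatorname{span}\{v_1,\dots,v_n\}$, which has dimension $d\le n$; if $u$ has a component orthogonal to $E$, projecting onto $E$ and renormalizing only enlarges every $|\langle v_j,u\rangle|$. On the unit sphere of $E$ consider the weighted log-product
\[
F(u)=\sum_{j=1}^n p_j\log\abs{\langle v_j,u\rangle},
\]
which is $-\infty$ exactly on the finitely many hyperplanes $v_j^\perp$. Take $u$ to be a maximizer, so $a_j:=\langle v_j,u\rangle\neq 0$ for all $j$. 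The first-order (Lagrange) condition reads $\sum_j p_j v_j/a_j=\lambda u$, and pairing with $u$ gives $\lambda=\sum_j p_j=1$. Set $y_j:=p_jv_j/a_j$. Then
\[
\sum_j y_j=u,\qquad \langle y_j,u\rangle=p_j,\qquad \sum_j\frac{p_j^2}{a_j^2}=\sum_j\norm{y_j}^2 .
\]
Since $\norm{u}=1$, the inequality \eqref{inq:strong} is therefore equivalent to $\sum_j\norm{y_j}^2\le \norm{\sum_j y_j}^2$, i.e.\ $\sum_{i\neq j}\langle y_i,y_j\rangle\ge 0$. This is the structure to be exploited.

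The real second variation is useless here. Along $u\cos t+w\sin t$ with $w\perp u$ it is automatically negative, so it carries no information. The key step is therefore to complexify: maximize $\prod_j\abs{\langle v_j,z\rangle}^{p_j}$ over unit vectors $z\in E\otimes\C$. Along the \emph{imaginary} perturbation $z_t=u\cos t+i\,w\sin t$, with $w\perp u$ real and $b_j=\langle v_j,w\rangle$, we have $\abs{\langle v_j,z_t\rangle}^2=a_j^2\cos^2t+b_j^2\sin^2t$. The second-order condition at a real maximizer then gives
\[
\sum_j p_j\frac{\langle v_j,w\rangle^2}{a_j^2}\le\norm{w}^2\qquad(w\perp u).
\]
Since the $w$-direction contributes nothing to $\langle v_j,u\rangle$, this extends to all real $w$ by splitting $w=\alpha u+w'$, giving the bound $\alpha^2+\norm{w'}^2$. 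Equivalently, the operator $T=\sum_j (p_j/a_j^2)\,v_j\otimes v_j$ satisfies $T\le I$, and in particular $\sum_j \langle y_j,w\rangle^2/p_j\le \norm{w}^2$. To conclude, one tests this quadratic inequality against suitable $w$: the natural candidates are $w=y_k$, or a trace/averaging argument with $\operatorname{tr}(T)=\sum_j p_j/a_j^2$. These should be combined with $\sum_j y_j=u$ and $\langle y_j,u\rangle=p_j$ to bound $\sum_j\norm{y_j}^2$ by $1$.

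I expect two main obstacles. The first is that the complex maximizer need not be real. One has to either show it can be taken real up to a phase, or redo the computation for a genuinely complex maximizer $z$, where $a_j$ becomes complex and the relevant identities become $\sum_j p_j\overline{v_j}/\overline{a_j}=z$-type relations. One then tries to run the same second-order argument with the perturbation $i w$ chosen orthogonal to $z$ in the real sense. If that route gets stuck, I would try the alternative Martínez–Ortega-Moreno device: maximize over the larger space $E\otimes\R^2$, and take the real vector $u$ to be a suitably chosen component of the maximizer. The second obstacle is the final algebraic step, extracting $\sum_j\norm{y_j}^2\le1$ from the operator inequality $T\le I$ together with the first-order identities. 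I expect this to require a clever choice of test vector, possibly combined with Cauchy--Schwarz in the weights $p_j$.
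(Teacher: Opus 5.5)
Your first-order reduction is correct, and your reformulation is the right one. At a critical point, \eqref{inq:strong} is equivalent to $\sum_{i\ne j}\inner{y_i}{y_j}\ge 0$. With $p_j=k_j/s$, multiplying this by $s^2\prod_j\inner{v_j}{u}$ gives exactly the degree-$(n-2)$ polynomial $g$ that the paper uses.

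\textbf{The gap.} You try to certify this quantity through second-order information at the maximizer, and that step fails. The imaginary second variation $\sum_j p_j\inner{v_j}{w}^2/a_j^2\le\norm{w}^2$ holds only if the real maximizer is also a local maximizer over the complex sphere, which in general it is not. Worse, $T\le I$ cannot hold at any real point in general. Test it at $w=v_k$: $p_k/a_k^2\le\inner{Tv_k}{v_k}\le 1$, i.e. $\abs{\inner{v_k}{u}}\ge\sqrt{p_k}$ for all $k$. That is the bound of Ball's complex plank theorem, and it is false for real vectors.

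\textbf{Counterexample.} Take $v_1,v_2,v_3$ at angles $0^\circ,60^\circ,120^\circ$ in $\R^2$ with $p_k=1/3$. For any real unit $u$, some line $\R v_k$ makes an angle of at least $60^\circ$ with $u$, so some $\abs{\inner{v_k}{u}}\le 1/2<1/\sqrt3$. At the real maximizer $u=e_1$ of $\prod_k\abs{\inner{v_k}{u}}=\tfrac14\abs{\cos 3\phi}$, one computes $T=\diag(1,2)\not\le I$. Indeed $(e_1+ie_2)/\sqrt2$ gives the strictly larger product $2^{-3/2}$. Yet \eqref{inq:strong} holds at $u=e_1$, with equality: $\tfrac19(1+4+4)=1$.

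So your ``first obstacle'' is not a technicality. The local condition you want is strictly stronger than the theorem and false in the real setting. Running the argument at a genuinely complex maximizer proves the complex plank theorem but only yields a complex $z$; working on $E\otimes\R^2$ is the same thing. You cannot extract a real unit vector with the required bounds from it. Your ``second obstacle'' becomes trivial once $T\le I$ is granted, using the test vector above, which is another sign that this hypothesis is too strong.

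\textbf{The missing idea.} You should not try to show that the maximizer works; the paper never identifies which critical point satisfies the inequality. Its argument runs as follows.
\begin{enumerate}
\item Take integer multiplicities $k_j$ and a basis $v_1,\dots,v_n$ of $\R^n$.
\item Write the critical-point equations as the quadratic system $y_j(Ay)_j=k_j/s$, where $A$ is the inverse Gram matrix.
\item Show every complex solution is real, by taking imaginary parts of the fixed-point equation. With one solution per sign chamber, this gives exactly $2^n$ solutions, all simple.
\item Apply the Euler--Jacobi vanishing theorem to $g$, whose degree $n-2$ is below the threshold $n-1$. This yields $\sum_{h(y)=0}\Bigl(s^2-\sum_j k_j^2/y_j^2\Bigr)\mu(y)=0$ with every $\mu(y)>0$.
\item Hence some critical point, not necessarily the maximizer, satisfies the bound.
\item Rational weights, real weights and dependent vectors then follow by approximation.
\end{enumerate}
Your cross-term quantity is the right object. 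What it needs is this global averaging identity over all critical points, not a local condition at one of them.
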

	
	The original conjecture corresponds to the case when all the weights are equal; that is, when $p_j = \frac{1}{n}$ for all $j$. This weighted version of the strong polarization inequality has a few immediate consequences. One of them is obtain by applying the weighted arithmetic-geometric mean inequality, yielding the following weighted version of the polarization inequality for products of powers of linear functional.

	\begin{theorem}[Weighted polarization inequality]\label{thm:weak}
	Under the assumptions of Theorem \ref{thm:strong},
	\[
	\sup_{\norm{u}=1}
	\prod_{j=1}^n \abs{\inner{v_j}{u}}^{p_i}
	\ge
	\prod_{j=1}^n p_i^{p_i/2}.
	\]
	\end{theorem}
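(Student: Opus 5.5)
We derive Theorem \ref{thm:weak} directly from Theorem \ref{thm:strong} by the weighted arithmetic--geometric mean inequality. (The exponents written $p_i$ in the statement are read as $p_j$, matching the product index.) Fix the unit vector $u$ given by Theorem \ref{thm:strong}. Since each term $p_j^2/\inner{v_j}{u}^2$ is finite, we have $\inner{v_j}{u}\neq 0$ for every $j$, so all the quantities below are positive.

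Apply weighted AM--GM with weights $p_j$, which sum to $1$, to the positive numbers $a_j = p_j/\inner{v_j}{u}^2$:
\[
\prod_{j=1}^n a_j^{p_j} \le \sum_{j=1}^n p_j a_j = \sum_{j=1}^n \frac{p_j^2}{\inner{v_j}{u}^2} \le 1 .
\]
This says $\prod_j p_j^{p_j} \le \prod_j \inner{v_j}{u}^{2p_j}$. Taking square roots gives
\[
\prod_{j=1}^n \abs{\inner{v_j}{u}}^{p_j} \ge \prod_{j=1}^n p_j^{p_j/2}.
\]
The supremum over unit vectors is at least this value, which proves the claim.

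The only choice that matters is the normalization $a_j = p_j/\inner{v_j}{u}^2$ rather than $p_j^2/\inner{v_j}{u}^2$. With this choice the weighted arithmetic mean is exactly the left-hand side of \eqref{inq:strong}. The weighted geometric mean then produces $\prod_j p_j^{p_j}$, which is the entropy factor $e^{-\mathcal H(p)}$ where $\mathcal H(p) = -\sum_j p_j\log p_j$. Beyond that, there is no real obstacle: the difficulty lies entirely in Theorem \ref{thm:strong}, which we are allowed to assume.
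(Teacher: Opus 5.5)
Your proof is correct and is essentially the paper's argument: the paper sets $\rho_j = p_j/\inner{v_j}{u}^2$ (your $a_j$), notes that $\sum_j p_j\rho_j$ is exactly the left-hand side of \eqref{inq:strong}, and applies weighted AM--GM to get $\prod_j \rho_j^{p_j}\le 1$. Your remark that $\inner{v_j}{u}\neq 0$, so that the $a_j$ are positive, is a detail the paper leaves implicit.
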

	
	The case when all weights are equals recovers the polarization inequality as well.\\

	The problem of estimating the norm of a product of linear functionals, or more generally the norm of a product of homogeneous polynomials, has been studied in several settings. The polarization problem asks how small the product of \(n\) norm-one linear functionals can be on the unit ball of a Banach or Hilbert space. It led, for a Banach space $X$, to the study of its polarization constant \(c_n(X)\) defined as the smallest constant \(C\) such that \[ C \norm{f_1 \cdots f_n} \geq \|f_1\|\cdots \|f_n\| \] for every choice of functionals \(f_1,\ldots,f_n\in X^*\). An immediate consequence of Ball's plank theorem is the general Banach-space bound \(c_n(X)\leq n^n\), while the sharper conjecture for real Hilbert spaces, namely \(c_n(H)\le n^{n/2}\), remained open for decades. Several works contributed to the development of this conjecture  \cite{BST,Frenkel,LeungLiRakesh,MatolcsiMunoz,PappasRevesz,PinascoRn,ReveszSarantopoulos}. The complex case was settled by Arias de Reyna \cite{Arias} and Ball \cite{BallComplex} and the equality case was characterized by Pinasco \cite{PinascoBombieri}, while recent work of Martínez and Ortega-Moreno resolves the real case through a variational approach and a elegant application of the Euler--Jacobi vanishing theorem \cite{MartinezOrtega}.\\

	Theorem~\ref{thm:strong} can also be seen as a strengthening of the classic polarization theorem for Hilbert spaces. Indeed, since each summand in \eqref{inq:strong} is less or equal to one, we obtain
	\[
	\abs{\inner{v_j}{u}}\geq p_j,
	\]
	for all $j=1,\ldots,n$.
	
	\subsection*{Shannon Entropy and sensing}
	
	The weighted polarization inequalities admit a natural interpretation in
	terms of sensing systems and information distribution. The vectors \(v_j\) may be viewed as sensing directions, frame elements, or measurement vectors, while $\abs{\langle v_j,u\rangle}^2$ represents the amount of energy captured from a signal \(u\in S_H\) (here  $S_H$ denotes the set of unit vector in $H$). The weighted polarization inequality ensures the existence of \(u\in S_H\)
	such that
	\[
	-\sum_{j=1}^n
	p_j \log\abs{\langle v_j,u\rangle}^2
	\le
	-\sum_{j=1}^n p_j\log p_j.
	\]
	The quantity on the right-hand side is the Shannon entropy
	\[
	H(p)=-\sum_{j=1}^n p_j\log p_j
	\]
	of the weight distribution. Now let \(V\) be a random sensing vector with distribution
	\[
	\mathbb P(V=v_j)=p_j,
	\]
	for all $j=1,\ldots,n$. For \(u\in S_H\), define
	$$
	X_u=-\log |\langle V,u\rangle|^2.
	$$
	Then
	\[
	\mathbb E X_u
	= - \sum_{j=1}^n p_j \log |\langle v_j,u\rangle|^2.
	\]
	Therefore, the theorem guarantees the existence of \(u\in S_H\) such that
	\[
	\mathbb E X_u \le H(p).
	\]
	In other words, there is always a signal whose expected logarithmic loss under random sensing is controlled by the entropy of the sensing distribution. This viewpoint is natural in frame theory and harmonic analysis. The weights \(p_j\) describe the distribution of the sensing architecture, while the entropy \(H(p)\) measures its effective complexity. The theorem says that this complexity controls the best possible average logarithmic correlation with the system. On the other hand, the stronger estimate
	\[
	\sum_{j=1}^n
	\frac{p_j^2}{|\langle v_j,u\rangle|^2}
	\le 1
	\]
	adds a reciprocal energy constraint: the correlations with the sensing system cannot all be too small relative to the prescribed distribution.\\

	The article is organized as follows. In Section~\ref{proofmain}, we prove Theorem~\ref{thm:strong} and we discuss its sharpness. In Section~\ref{sec:con1}, we prove the polarization inequality and its implication for the plank problem in Hilbert spaces and the polarization constant of $\ell_p$ spaces.

	\section{Proof of Theorem~\ref{thm:strong}}\label{proofmain}
	
	The proof is divided in two steps: first we show that Theorem~\ref{thm:strong} is true in the case of rational weights. Second we obtain the general result by a simple approximation argument. Our approach follows closely the proof of the main theorem in \cite{MartinezOrtega}: a careful analysis of the extremal point method and the use of the Euler--Jacobi vanishing theorem.
	
	\subsection{Integer multiplicities}
	
	We first prove Theorem~\ref{thm:strong} for probability vectors of the form \(\alpha_j=k_j/s\), where \(k_1,\ldots,k_n\in\N\) and \(s=k_1+\cdots+k_n\). Thus the integers \(k_j\) should be regarded as multiplicities, or equivalently as the powers appearing in the auxiliary polynomial below.
	
	Let \(k_1,\ldots,k_n\in\N\), put
	\[
	s=k_1+\cdots+k_n,
	\qquad
	\alpha_j=\frac{k_j}{s},
	\]
	and suppose first that \(v_1,\ldots,v_n\) is a basis of \(\R^n\). We shall prove that there exists \(u\in S^{n-1}\) such that
	\[
	\sum_{j=1}^n\frac{k_j^2}{\inner{v_j}{u}^2}\le s^2.
	\]
	Equivalently,
	\[
	\sum_{j=1}^n\frac{\alpha_j^2}{\inner{v_j}{u}^2}\le1.
	\]
	
	Let \(w_1,\ldots,w_n\) be the dual basis, so that \(\inner{v_j}{w_i}=\delta_{ji}\). For \(x\in\R^n\), set
	\[
	y_j=\inner{v_j}{x},
	\qquad
	z_j=\inner{w_j}{x}.
	\]
	Then \(x=\sum_j z_j v_j\). Let \(G=(\inner{v_j}{v_k})_{j,k}\) be the Gram matrix and let \(A=G^{-1}\). In vector notation, \(z=Ay\).
	
	Consider
	\[
	P(x)=\prod_{j=1}^n \inner{v_j}{x}^{k_j}.
	\]
	At a critical point of \(P\) on the sphere where \(P\ne0\), the Lagrange multiplier equation for \(\log |P|\) is
	\[
	\sum_{j=1}^n k_j\frac{v_j}{y_j}=\lambda x.
	\]
	Taking the inner product with \(x\), we get \(\lambda=s\). Evaluating against \(w_j\), we obtain
	\[
	\frac{k_j}{y_j}=sz_j,
	\]
	or equivalently
	\[
	y_jz_j=\frac{k_j}{s}=\alpha_j.
	\]
	Since \(z=Ay\), the extremal points are described by the quadratic system
	\[
	y_j(Ay)_j=\alpha_j,
	\qquad j=1,\ldots,n.
	\]
	Define \(h=(h_1,\ldots,h_n):\C^n\to\C^n\) by
	\[
	h_j(y)=y_j(Ay)_j-\alpha_j.
	\]
	
	\begin{lemma}\label{lem:complex-real}
		Every solution \(y\in\C^n\) of \(h(y)=0\) is real.
	\end{lemma}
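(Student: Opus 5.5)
The plan is to exploit the structure $y_j z_j=\alpha_j>0$ together with the fact that $A=G^{-1}$ is a real symmetric positive definite matrix. Take a solution $y\in\C^n$ of $h(y)=0$, and write $y=a+ib$ with $a,b\in\R^n$. The goal is to show that $b=0$.

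First, note that each $y_j\neq 0$. Indeed, $y_j(Ay)_j=\alpha_j=k_j/s>0$. Hence, with $z=Ay$, we can solve
\[
z_j=\frac{\alpha_j}{y_j}=\frac{\alpha_j\,\overline{y_j}}{|y_j|^2},
\qquad\text{so}\qquad
\operatorname{Im} z_j=-\frac{\alpha_j\, b_j}{|y_j|^2}.
\]
Second, since $A$ is real, $z=Ay=Aa+iAb$, so $\operatorname{Im} z=Ab$.

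Now pair the imaginary part of $z$ with the real vector $b$ in two ways. On one hand,
\[
b^{T}\operatorname{Im} z=b^{T}Ab .
\]
On the other hand, from the coordinate formula above,
\[
b^{T}\operatorname{Im} z=\sum_{j=1}^n b_j\operatorname{Im} z_j=-\sum_{j=1}^n\frac{\alpha_j\,b_j^2}{|y_j|^2}\le 0 .
\]
Comparing the two expressions gives $b^{T}Ab\le 0$.

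Finally, $A=G^{-1}$ is positive definite, because $G$ is the Gram matrix of the basis $v_1,\ldots,v_n$ and is therefore positive definite. Hence $b^TAb\le 0$ forces $b=0$, and $y=a\in\R^n$ is real.

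The main point, rather than a real obstacle, is to choose the correct test vector, namely the imaginary part $b$ itself. Pairing $\operatorname{Im}(Ay)$ with $b$ converts the coordinatewise condition $y_jz_j\in(0,\infty)$ into a global sign condition that conflicts with the positivity of $A$. The hypothesis $\alpha_j>0$ is essential: it guarantees both that $y_j\neq0$ and that each term $-\alpha_j b_j^2/|y_j|^2$ has a definite sign.
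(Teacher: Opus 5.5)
Your proof is correct and is essentially the paper's argument written in $y$-coordinates. The paper sets $x=\sum_j z_jv_j$, takes imaginary parts of the fixed-point equation $x=\frac1s\sum_j k_jv_j/\inner{v_j}{x}$ and pairs with $\operatorname{Im}x$; since $\norm{\operatorname{Im}x}^2=b^{T}Ab$ for your $b=\operatorname{Im}y$, its identity $s\norm{\operatorname{Im}x}^2=-\sum_j k_jb_j^2/\abs{y_j}^2$ is exactly your $b^{T}Ab=-\sum_j\alpha_jb_j^2/\abs{y_j}^2$, with positive definiteness of $A$ playing the role of positivity of the norm.
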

	
	\begin{proof}
		Let \(y\in\C^n\) be a solution. Since \(\alpha_j>0\), no \(y_j\) vanishes. Put \(z=Ay\) and
		\[
		x=\sum_{j=1}^n z_j v_j\in\C^n.
		\]
		Then \(y_j=\inner{v_j}{x}\). Moreover, from \(y_jz_j=k_j/s\),
		\[
		z_j=\frac{k_j}{s\,y_j},
		\]
		and therefore
		\[
		x=\frac1s\sum_{j=1}^n k_j\frac{v_j}{\inner{v_j}{x}}.
		\]
		Write \(x=a+ib\), with \(a,b\in\R^n\). Since the vectors \(v_j\) are real,
		\[
		\inner{v_j}{x}=\inner{v_j}{a}+i\inner{v_j}{b}.
		\]
		Set \(A_j=\inner{v_j}{a}\) and \(B_j=\inner{v_j}{b}\). Taking imaginary parts in the fixed point equation gives
		\[
		sb=-\sum_{j=1}^n k_j\frac{B_j}{A_j^2+B_j^2}v_j.
		\]
		Taking the real inner product with \(b\),
		\[
		s\norm{b}^2=-\sum_{j=1}^n k_j\frac{B_j^2}{A_j^2+B_j^2}.
		\]
		The left-hand side is non-negative and the right-hand side is non-positive. Hence \(b=0\), and so \(x\), and therefore \(y\), is real.
	\end{proof}
	
	We will use the following lemma.
	
	\begin{lemma}[\cite{MartinezOrtega}]\label{lem:chambers}
		In each chamber determined by the signs of \(y_1,\ldots,y_n\), the system \(h(y)=0\) has a unique real solution.
	\end{lemma}
	
	%\begin{proof}
	%	Consider
	%	\[
	%	\Psi(x)=\frac12\norm{x}^2-\frac1s\sum_{i=1}^n k_i\log |\inner{v_i}{x}|.
	%	\]
	%	In each chamber, \(\Psi\) is strictly convex because
	%	\[
	%	\nabla^2\Psi(x)
	%	=
	%	I+
	%	\frac1s\sum_{i=1}^n k_i\frac{v_i\otimes v_i}{\inner{v_i}{x}^2}
	%	\]
	%	is positive definite. Moreover, \(\Psi(x)\to+\infty\) when \(x\) approaches the boundary of the chamber, and also as \(\norm{x}\to\infty\). Thus \(\Psi\) has a unique minimum in each chamber. Its Euler equation is
	%	\[
	%	x=\frac1s\sum_{i=1}^n k_i\frac{v_i}{\inner{v_i}{x}},
	%	\]
	%	which is equivalent to \(y_i(Ay)_i=\alpha_i\).
	%\end{proof}
	
	It follows from Lemmas~\ref{lem:complex-real} and \ref{lem:chambers} that
	\(h(y)=0\) has exactly \(2^n\) solutions in \(\C^n\), all of them real. Moreover, all solutions are simple. Indeed, at a solution,
	\[
	Jh(y)=\diag(y)
	\left(A+\diag\left(\frac{\alpha_j}{y_j^2}\right)\right),
	\]
	and this matrix is invertible since \(A\) is positive definite and \(y_j\ne0\).
	
	We now apply the Euler--Jacobi vanishing theorem. Since the system has \(2^n\)
	simple solutions and each \(h_j\) has degree \(2\), every polynomial \(g\) with
	\(\deg g\le n-1\) satisfies
	\[
	\sum_{h(y)=0}\frac{g(y)}{\det Jh(y)}=0.
	\]
	Assume first that \(n\ge2\), the case \(n=1\) being immediate. Put
	\[
	Q(y)=\prod_{j=1}^n y_j
	\]
	and define
	\[
	g(y)=2\sum_{1\le i<j\le n}k_i k_j\inner{v_i}{v_j}
	\prod_{\ell\ne i,j}y_\ell.
	\]
	Then \(\deg g=n-2\). Also,
	\[
	\frac{g(y)}{Q(y)}
	=2\sum_{1\le i<j\le n}k_i k_j\frac{\inner{v_i}{v_j}}{y_iy_j}.
	\]
	On the other hand,
	\[
	\left\|\sum_{j=1}^n k_j\frac{v_j}{y_j}\right\|^2
	=
	\sum_{j=1}^n\frac{k_j^2}{y_j^2}
	+2\sum_{1\le i<j\le n}k_i k_j\frac{\inner{v_i}{v_j}}{y_iy_j}.
	\]
	At a solution of \(h(y)=0\), the associated vector \(x\) satisfies
	\[
	\sum_{j=1}^n k_j\frac{v_j}{y_j}=sx
	\]
	and
	\[
	\norm{x}^2=\sum_{i=1}^n y_i z_i=\sum_{i=1}^n\alpha_i=1.
	\]
	Thus, at every solution,
	\[
	g(y)=Q(y)\left(s^2-\sum_{j=1}^n\frac{k_j^2}{y_j^2}\right).
	\]
	Moreover,
	\[
	\det Jh(y)
	=Q(y)
	\det\left(A+\diag\left(\frac{\alpha_j}{y_j^2}\right)\right).
	\]
	Set
	\[
	\mu(y)=
	\det\left(A+\diag\left(\frac{\alpha_j}{y_j^2}\right)\right)^{-1}.
	\]
	Then \(\mu(y)>0\) for every solution. Euler--Jacobi gives
	\[
	\sum_{h(y)=0}
	\left(s^2-\sum_{j=1}^n\frac{k_j^2}{y_j^2}\right)\mu(y)=0.
	\]
	Since all \(\mu(y)\) are positive, there exists a solution \(y\) such that
	\[
	\sum_{j=1}^n\frac{k_j^2}{y_j^2}\le s^2.
	\]
	For the corresponding unit vector \(u\), this is precisely the case
	\[
	\sum_{j=1}^n\frac{k_j^2}{\inner{v_j}{u}^2}\le s^2.
	\]
	This proves Theorem~\ref{thm:strong} for weights of the form
	\(\alpha_j=k_j/s\) when \(v_1,\ldots,v_n\) is a basis of \(\R^n\).
	
	We pass to arbitrary unit vectors in a real Hilbert space by the usual perturbation argument. The statement depends only on the finite-dimensional subspace generated by the vectors, so we may assume this is the ambient space. If the vectors are linearly independent, the preceding argument applies. If they are linearly dependent, embed the space in \(\R^n\) and choose unit vectors \(v_j^{(t)}\to v_j\) such that \(v_j^{(t)},\ldots,v_n^{(t)}\) is a basis of \(\R^n\) for every \(t>0\). Applying the basis case and passing to a convergent subsequence gives the desired inequality for the original vectors. If the limiting vector has a component orthogonal to the original span, projecting onto that span and normalizing can only increase the quantities \(\abs{\inner{v_i}{u}}\). This proves Theorem~\ref{thm:strong} for weights of the form
	\(\alpha_j=k_j/s\) in full generality.
	
	\subsection{Rational and real weights}
	
	The passage to rational weights is immediate. Let
	\(p_1,\ldots,p_n\in\Q_{>0}\) and suppose that \(\sum_i p_j=1\). Choose
	\(N\in\N\) so that \(k_j=Np_j\in\N\) for every \(i\). Then
	\[
	N=k_1+\cdots+k_n,
	\qquad
	p_j=\frac{k_i}{N}.
	\]
	By the multiplicity case proved above, there exists a unit vector \(u\) such
	that
	\[
	\sum_{j=1}^n\frac{k_j^2}{\inner{v_j}{u}^2}\le N^2.
	\]
	Dividing by \(N^2\),
	\[
	\sum_{j=1}^n\frac{p_j^2}{\inner{v_j}{u}^2}\le1.
	\]
	
	For real positive weights, take rational vectors \(p^{(m)}=(p_1^{(m)},\ldots,p_n^{(m)})\) with positive coordinates, \(\sum_j p_j^{(m)}=1\), and \(p_j^{(m)}\to p_j\). By the rational case, there exists \(u_m\in S_H\) such that
	\[
	\sum_{j=1}^n\frac{(p_i^{(m)})^2}{\inner{v_j}{u_m}^2}\le1.
	\]
	Restricting to the finite-dimensional span and passing to a subsequence, \(u_m\to u\in S_H\). The preceding inequality forces \(|\inner{v_j}{u_m}|\ge p_j^{(m)}\) for every \(j\), and hence \(|\inner{v_j}{u}|\ge p_j>0\). Passing to the limit gives the following.
	\[
	\sum_{j=1}^n\frac{p_j^2}{\inner{v_j}{u}^2}\le 1.
	\]
	This proves Theorem \ref{thm:strong}.
	
	\subsection{Sharpness and equality}\label{sec:sharpness}
	
	The constant \(1\) in Theorem~\ref{thm:strong} is the best possible. Indeed, if
	\(v_1,\ldots,v_n\) is an orthonormal system and \(u\in S_H\), then, writing
	\(a_j=\inner{v_j}{u}\), Cauchy's inequality gives
	\[
	1=\left(\sum_{j=1}^n p_j\right)^2
	\le
	\left(\sum_{j=1}^n\frac{p_j^2}{a_j^2}\right)
	\left(\sum_{j=1}^n a_j^2\right)
	\le
	\sum_{j=1}^n\frac{p_j^2}{a_j^2}.
	\]
	Thus, the right-hand side in Theorem~\ref{thm:strong} cannot be replaced by a
	smaller constant. Equality is attained, in the orthonormal case, in
	\[
	u=\sum_{j=1}^n \varepsilon_j\sqrt{p_j}\,v_j,
	\qquad \varepsilon_i\in\{-1,1\}.
	\]
	Consequently, the constant in the weighted polarization inequality is also the best possible, since for this vector one has
	\[
	\prod_{i=1}^n |\inner{v_i}{u}|^{p_i}
	=
	\prod_{i=1}^n p_i^{p_i/2}.
	\]
	
	\begin{remark}
    The reverse implication can be proved in a similar way to the argument of
    Martínez and Ortega Moreno in \cite{MartinezOrtega}.
    \end{remark}
	
	\section{Consequences of Theorem~\ref{thm:strong}}\label{sec:con1}
	
	The following consequence is the weighted product inequality. It is the form in which the result is closest to the usual polarization problem.

	\begin{proof}[Proof of theorem~\ref{thm:weak}]
		Let \(u\) be as in Theorem \ref{thm:strong}. Put
		\[
		\rho_j=\frac{p_j}{\inner{v_j}{u}^2}.
		\]
		Then
		\[
		\sum_{j=1}^n p_j\rho_j
		=
		\sum_{j=1}^n\frac{p_j^2}{\inner{v_j}{u}^2}
		\le1.
		\]
		By the weighted arithmetic-geometric mean inequality,
		\[
		\prod_{j=1}^n\rho_j^{p_j}\le 1.
		\]
		Equivalently,
		\[
		\prod_{j=1}^n\abs{\inner{v_j}{u}}^{p_j}
		\ge
		\prod_{j=1}^n p_j^{p_j/2}.
		\]
	\end{proof}
		
	The strong form also yields a centred plank theorem in Hilbert spaces. Since relatively few proofs of plank-type results are known, we believe that this observation is worth recording.
	
	\begin{corollary}[Plank theorem for Hilbert spaces]\label{cor:plank}
		Let \(v_1,\ldots,v_n\) be unit vectors in a real Hilbert space and let \(p_1,\ldots,p_n>0\) satisfy \(\sum_i p_i=1\). Then there exists \(u\in S_H\) such that
		\[
		\abs{\inner{v_j}{u}}\ge p_j,
		\qquad j=1,\ldots,n.
		\]
		Equivalently, the spherical planks
		\[
		\{u\in S_H: \abs{\inner{v_j}{u}}<p_j\},
		\qquad j=1,\ldots,n,
		\]
		do not cover the unit sphere.
	\end{corollary}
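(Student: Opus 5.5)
The corollary follows directly from Theorem~\ref{thm:strong}. Given unit vectors \(v_1,\ldots,v_n\) and weights \(p_j>0\) with \(\sum_j p_j=1\), Theorem~\ref{thm:strong} supplies a unit vector \(u\in S_H\) with
\[
\sum_{j=1}^n \frac{p_j^2}{\inner{v_j}{u}^2}\le 1 .
\]
Implicit in this statement is that every \(\inner{v_j}{u}\ne 0\); otherwise the corresponding summand would be infinite. The real-weight limiting argument in Section~\ref{proofmain} also produces this directly, as \(\abs{\inner{v_j}{u}}\ge p_j>0\).

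The key step is that each summand is nonnegative. Hence for every fixed \(j\),
\[
\frac{p_j^2}{\inner{v_j}{u}^2}\le \sum_{i=1}^n \frac{p_i^2}{\inner{v_i}{u}^2}\le 1 .
\]
Multiplying by \(\inner{v_j}{u}^2>0\) and taking square roots, using \(p_j>0\), gives \(\abs{\inner{v_j}{u}}\ge p_j\) for all \(j=1,\ldots,n\) simultaneously, with the same \(u\).

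The equivalent formulation is a restatement. The point \(u\) found above satisfies \(\abs{\inner{v_j}{u}}\ge p_j\) for every \(j\), so it lies outside each open spherical plank \(\{u\in S_H:\abs{\inner{v_j}{u}}<p_j\}\). Therefore the union of these planks misses \(u\) and does not cover \(S_H\). Conversely, if the planks do not cover the sphere, any uncovered point satisfies all the inequalities. So the two statements are equivalent.

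There is no real obstacle. The only point needing care is confirming that the \(u\) from Theorem~\ref{thm:strong} has all \(\inner{v_j}{u}\) nonzero, so that the inequality is meaningful. This is built into the statement, since the left-hand side must be finite, and it is verified in the approximation step of its proof.
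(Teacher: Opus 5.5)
Your proof is correct and follows essentially the same route as the paper: apply Theorem~\ref{thm:strong}, use nonnegativity of the summands to bound each one by $1$, and read off $\abs{\inner{v_j}{u}}\ge p_j$. Your additional remarks, that each $\inner{v_j}{u}$ is nonzero and that the bound is equivalent to the plank formulation, are sound details the paper leaves implicit.
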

	
	\begin{proof}
		By Theorem \ref{thm:strong},
		\[
		\sum_{j=1}^n\frac{p_j^2}{\inner{v_j}{u}^2}\le1.
		\]
		Each summand is non-negative, hence
		\[
		\frac{p_j^2}{\inner{v_j}{u}^2}\le1
		\]
		for every \(j\). This gives \(\abs{\inner{v_j}{u}}\ge p_j\).
	\end{proof}
	
	%	\section{A consequence for \texorpdfstring{\(\ell_p^d\)}{ell_p^d}, \texorpdfstring{\(1\le p\le2\)}{1 <= p <= 2}}\label{sec:lp}
		
		The Hilbertian result also gives a simple estimate for finite-dimensional \(\ell_p\) spaces in the range \(1\le p\le2\). Let \(d\) denote the dimension and let \(n\) be the number of functionals.
		
		\begin{proposition}\label{prop:lp}
				Let \(1\le p\le2\), let \(q\) be the conjugate exponent, and let
				\[
				x_1^*,\ldots,x_n^*\in(\ell_p^d)^*=\ell_q^d
				\]
				be norm-one functionals. If \(\lambda_1,\ldots,\lambda_n>0\) and \(\Lambda=\lambda_1+\cdots+\lambda_n\), then
				\[
				\sup_{\norm{x}_p\le1}
				\prod_{j=1}^n |x_j^*(x)|^{\lambda_j}
				\ge
				d^{-\left(\frac1p-\frac12\right)\Lambda}
				\prod_{j=1}^n
				\left(\frac{\lambda_j}{\Lambda}\right)^{\lambda_j/2}.
				\]
			\end{proposition}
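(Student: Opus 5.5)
The plan is to reduce to the Hilbertian Theorem~\ref{thm:weak} by comparing the $\ell_p^d$ and $\ell_2^d$ norms. Set $p_j=\lambda_j/\Lambda$, so that $\sum_j p_j=1$. Each functional $x_j^*$ is represented by a vector $a_j\in\R^d$ with $\norm{a_j}_q=1$, via $x_j^*(x)=\inner{a_j}{x}$. Since $1\le p\le 2$, we have $q\ge 2$ and hence $\norm{a_j}_2\ge\norm{a_j}_q=1$. Put $v_j=a_j/\norm{a_j}_2$, which are unit vectors in $\ell_2^d$.

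Next I apply Theorem~\ref{thm:weak} to $v_1,\ldots,v_n$ with weights $p_j$. This gives $u$ with $\norm{u}_2=1$ such that
\[
\prod_{j}\abs{\inner{v_j}{u}}^{p_j}\ge\prod_j p_j^{p_j/2}.
\]
Raising both sides to the power $\Lambda$ yields
\[
\prod_j\abs{\inner{v_j}{u}}^{\lambda_j}\ge\prod_j(\lambda_j/\Lambda)^{\lambda_j/2}.
\]
Since $\norm{a_j}_2\ge1$, we get $\abs{\inner{a_j}{u}}\ge\abs{\inner{v_j}{u}}$, so the same lower bound holds for $\prod_j\abs{x_j^*(u)}^{\lambda_j}$.

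Finally I normalize $u$ in $\ell_p^d$. By Hölder's inequality, $\norm{u}_p\le d^{1/p-1/2}\norm{u}_2=d^{1/p-1/2}$. Set $x=u/\norm{u}_p$, so $\norm{x}_p=1$. By homogeneity of degree $\Lambda$,
\[
\prod_j\abs{x_j^*(x)}^{\lambda_j}=\norm{u}_p^{-\Lambda}\prod_j\abs{x_j^*(u)}^{\lambda_j}\ge d^{-(1/p-1/2)\Lambda}\prod_j(\lambda_j/\Lambda)^{\lambda_j/2}.
\]
This is the claimed bound.

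There is no real obstacle in this argument. The only points needing care are the directions of the two norm comparisons: $\norm{a}_2\ge\norm{a}_q$ for $q\ge2$, and $\norm{u}_p\le d^{1/p-1/2}\norm{u}_2$ for $p\le2$. One should also note that Theorem~\ref{thm:weak} allows arbitrary positive weights summing to one, so the real (not necessarily rational) $\lambda_j$ cause no difficulty.
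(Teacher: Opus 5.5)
Your proposal is correct and matches the paper's proof step for step. You represent $x_j^*$ by $a_j$, normalize using $\norm{a_j}_2\ge\norm{a_j}_q=1$, apply Theorem~\ref{thm:weak} with $p_j=\lambda_j/\Lambda$ (a maximizing $u$ exists by compactness of the sphere, or directly from Theorem~\ref{thm:strong}), and rescale using $\norm{u}_p\le d^{1/p-1/2}$.
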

		
		\begin{proof}
				Write \(x_j^*(x)=\inner{a_j}{x}\), with \(\norm{a_j}_q=1\). Since \(q\ge2\), \(\norm{a_j}_2\ge\norm{a_j}_q=1\). Put
				\[
				u_j=\frac{a_j}{\norm{a_j}_2}\in S_{\ell_2^d}.
				\]
				By applying Theorem~\ref{thm:weak} with $p_j=\frac{\lambda_j}{\Lambda}$, there exists \(y\in S_{\ell_2^d}\) such that
				\[
				\prod_{j=1}^n |\inner{u_j}{y}|^{\lambda_j}
				\ge
				\prod_{j=1}^n
				\left(\frac{\lambda_j}{\Lambda}\right)^{\lambda_j/2}.
				\]
				Since \(\norm{a_j}_2\ge1\), the same lower bound holds with \(a_j\) in place of \(u_j\). Moreover, for \(1\le p\le2\),
				\[
				\norm{y}_p\le d^{1/p-1/2}\norm{y}_2=d^{1/p-1/2}.
				\]
				Taking \(x=y/\norm{y}_p\), we get \(\norm{x}_p\le1\) and
				\[
				|x_j^*(x)|
				\ge d^{-(1/p-1/2)}|\inner{a_j}{y}|.
				\]
				Multiplying with weights \(\lambda_j\) gives the result.
			\end{proof}
		
		For \(n=d\), \(x_j^*=e_j^*\), and equal weights, the estimate gives \(d^{-d/p}\), which is the exact value of
		\[
		\sup_{\norm{x}_p\le1}|x_1\cdots x_d|.
		\]
		Thus, the order in the dimension is optimal.


\begin{thebibliography}{99}
		
		\bibitem{Arias}
		J. Arias-de-Reyna,
		\emph{Gaussian variables, polynomials and permanents},
		Linear Algebra Appl. 285 (1998), 107--114.
		
		\bibitem{BallComplex}
		K. M. Ball,
		\emph{The complex plank problem},
		Bull. London Math. Soc. 33 (2001), 433--442.
		
		\bibitem{BallPlank}
		K. M. Ball,
		\emph{The plank problem for symmetric bodies},
		Invent. Math. 104 (1991), 535--543.
		
		\bibitem{Bang}
		T. Bang,
		\emph{A solution of the plank problem},
		Proc. Amer. Math. Soc. 2 (1951), 990--993.
		
		\bibitem{BST}
		C. Benítez, Y. Sarantopoulos and A. M. Tonge,
		\emph{Lower bounds for norms of products of polynomials},
		Math. Proc. Cambridge Philos. Soc. 124 (1998), 395--408.
		
		\bibitem{CarandoPinascoRodriguez}
		D. Carando, D. Pinasco and J. T. Rodríguez,
		\emph{Lower bounds for norms of products of polynomials on \(L_p\) spaces},
		Studia Math. 214 (2013), 157--166.
		
		
		\bibitem{NonlinearPlanks}
		D. Carando, D. Pinasco and J. T. Rodríguez,
		\emph{Non-linear plank problems and polynomial inequalities},
		Rev. Mat. Complut. 30 (2017), no. 3, 507--523.
		
		\bibitem{FiniteDimensional}
		D. Carando, D. Pinasco and J. T. Rodríguez,
		\emph{On the linear polarization constants of finite dimensional spaces},
		Math. Nachr. 290 (2017), 2547--2559.
		
		\bibitem{Frenkel}
		P. E. Frenkel,
		\emph{Pfaffians, hafnians and products of real linear functionals},
		Math. Res. Lett. 15 (2008), no. 2, 351--358.
		
		\bibitem{GarciaVilla}
		J. C. García-Vázquez and R. Villa,
		\emph{Lower bounds for multilinear forms defined on Hilbert spaces},
		Mathematika 46 (1999), 315--322.
		
		\bibitem{LeungLiRakesh}
		Y. J. Leung, W. V. Li and Rakesh,
		\emph{The \(d\)-th linear polarization constant of \(\R^d\)},
		J. Funct. Anal. 255 (2008), 2861--2871.
		
		\bibitem{MartinezOrtega}
		A. D. Martínez and O. Ortega-Moreno,
		\emph{A solution to the polarization problem},
		preprint, 2026.
		
		\bibitem{MatolcsiMunoz}
		M. Matolcsi and G. A. Muñoz,
		\emph{On the real linear polarization constant problem},
		Math. Inequal. Appl. 9 (2006), 485--494.
		
		\bibitem{MoslehianMunozPeraltaSeoane}
		M. S. Moslehian, G. A. Muñoz-Fernández, A. M. Peralta and J. B. Seoane-Sepúlveda,
		\emph{Similarities and differences between real and complex Banach spaces:
			an overview and recent developments},
		Rev. R. Acad. Cienc. Exactas Fís. Nat. Ser. A Mat. RACSAM 116 (2022),
		article no. 88.
		
		\bibitem{MunozSarantopoulosTonge}
		G. Muñoz, Y. Sarantopoulos and A. Tonge,
		\emph{Complexifications of real Banach spaces, polynomials and multilinear maps},
		Studia Math. 134 (1999), 1--33.
		
		\bibitem{PappasRevesz}
		A. Pappas and S. G. Révész,
		\emph{Linear polarization constants of Hilbert spaces},
		J. Math. Anal. Appl. 300 (2004), 129--146.
		
		\bibitem{PinascoBombieri}
		D. Pinasco,
		\emph{Lower bounds for norms of products of polynomials via Bombieri inequality},
		Trans. Amer. Math. Soc. 364 (2012), 3993--4010.
		
		\bibitem{PinascoRn}
		D. Pinasco,
		\emph{On the \(n\)-th linear polarization constant of \(\mathbb R^n\)},
		Math. Nachr. 296 (2023), no. 8, 3593--3605.
		
		\bibitem{ReveszSarantopoulos}
		S. G. Révész and Y. Sarantopoulos,
		\emph{Plank problems, polarization and Chebyshev constants},
		J. Korean Math. Soc. 41 (2004), 157--174.
		
		\bibitem{Shannon}
		C. E. Shannon,
		\emph{A mathematical theory of communication},
		Bell System Technical Journal 27 (1948), no. 3, 379--423; no. 4, 623--656.
		
		
	\end{thebibliography}
\end{document}